\pdfoutput=1
\documentclass[letterpaper,10pt,conference]{ieeeconf}

\IEEEoverridecommandlockouts
\overrideIEEEmargins

%%%%%%%%%%%%%%%%%%%%%%%%%%%%%% Package %%%%%%%%%%%%%%%%%%%%%%%%%%%%%%
%% Macros for Coordination book

%%%% enumerate environment containing text and with un-interrupted numbering

%% \setcounter{saveenum}{\value{enumi}}
%% \setcounter{enumi}{\value{saveenum}}

% abbreviations

%% Set definitions
\renewcommand{\natural}{{\mathbb{N}}}

 % redundant, to remove
\newcommand{\real}{\ensuremath{\mathbb{R}}}

%% Set operations

%% mathematical constructs: \setdef, \until, etc
\newcommand{\until}[1]{\{1,\dots, #1\}}
\newcommand{\subscr}[2]{#1_{\textup{#2}}}
\newcommand{\supscr}[2]{#1^{\textup{#2}}}

\newcommand{\diag}[1]{\operatorname{diag}(#1)}

\newcommand{\spn}{\operatorname{span}}

%% mathematical functions

%% Symbols from Chp 2 Curves

%% Symbols from Chp 2 Matrix

%% Symbols from Chp 2 Distributed Algorithms

%%% make sure there are no 2^X in the final document

%% graph
\newcommand{\graph}{G}

%% \newcommand{\TMST}{\ensuremath{\subscr{T}{MST}}\xspace}
%% \newcommand{\TSP}{\ensuremath{\subscr{T}{TSP}}\xspace}

%% Chp2: state machines and dynamical systems

%% Chp2: synchronous network

%% robot and environment sensing: function, alphabet, symbol

%% algorithmic enviroment fixes

 %% mytt = my typefont text

%% Chp2: linear iterations

%% proof of number theory result

%% Set of matrices

%% Symbols from Chp 3 Geometry

\renewcommand{\epsilon}{\varepsilon}

%% \newcommand{\robustvisibilityset}[3]{\operatorname{Vi}_{#3}(#1;#2)}

%% range-limited visibility set; defined chapter 4 or 5

%% distances

%% polytopes

%% geometric centers

%% proximity graphs

%% \newcommand{\RVgraph}{\subscr{\proximitygraph}{$\eps$-vis,Q}}

%% geometric optimization

%% maybe to move into Chap 2

%% Symbols from Chp 4 Model

%% Feedback communication and control + Tasks

%% Systems Names -- polish following names: why are there 2 for each shit?

%% FCC Names

 %% why two?

%% Task Names

%% Sensor-based networks

%% Costs

%%% more about the agree and pursue

%
%

%%% random definitions

 %%to erase

\newcommand{\Pc}{{\mathcal{P}}}

%% symbols for Chp5: rendezvous

%% connectivity maintenance

%%\newcommand{\Dc}{\subscr{\UConstrSet}{constraint}}
%%\newcommand{\UConstrSet}{\mathcal{U}}
%%\newcommand{\UConstrSetVis}{\subscr{\mathcal{U}}{\visdisk}}

%% copyright

%% packages for Chp5

\usepackage{psfrag}

%% symbols for Chp 6: Deployment

%% symbols for Chp 7: estimation

%% %% OLD ??
%% \newcommand{\optplace}{\mytt{opt-place}}

%% \newcommand{\lw}{\mytt{L}}
%% \newcommand{\setpath}{\operatorname{path}}
%% \newcommand{\regionest}{\mytt{\polygon}}

%% \newcommand{\timewindow}{\mytt{t}}
%% \newcommand{\prev}{\mytt{qprev}}
%% \newcommand{\apdistance}{\mytt{d}}

%% \newcommand{\forget}{\mytt{forget}}

% \newcommand{\arclength}{\subscr{\gamma}{arc}}

%\renewcommand{\ymaxmin}{\subscr{y}{M-m}}

\setlength{\tabcolsep}{3pt}

\usepackage{multicol}
\usepackage{graphicx,epsfig,float,wrapfig}
\usepackage{epstopdf}
\usepackage{float}
\usepackage[left=2cm, right=2cm, top=2cm, bottom=2.5cm]{geometry}
\usepackage{epsfig}
\usepackage{url}
\usepackage{amssymb,amsmath,amsfonts,layout}
\usepackage{makeidx}
\usepackage{blkarray,multirow}
\usepackage{cite}
\usepackage{enumerate}
\usepackage{algpseudocode}
\usepackage{algorithm}
\usepackage{enumitem,indentfirst}

\usepackage{amsthm}
\usepackage{bbm}
\usepackage{xspace}
\usepackage [english]{babel}
\usepackage [autostyle, english = american]{csquotes}
\MakeOuterQuote{"}

%% Additional commands
\newcommand{\nulo}{\operatorname{null}}

\algnewcommand{\Inputs}[1]{%
	\State \textbf{Input:}
	\Statex \hspace*{\algorithmicindent}\parbox[t]{.8\linewidth}{\raggedright #1}
}
\algnewcommand{\Initialize}[1]{%
	\State \textbf{Initialize:}
	\Statex \hspace*{\algorithmicindent}\parbox[t]{.8\linewidth}{\raggedright #1}
}
\algnewcommand{\Outputs}[1]{%
	\State \textbf{Output:}
	\Statex \hspace*{\algorithmicindent}\parbox[t]{.8\linewidth}{\raggedright #1}
}

%% Algorithm names
\newcommand{\distnewton}{\textsc{distributed approx-Newton}\xspace}
\newcommand{\dana}{\textsc{DANA}\xspace}

\usepackage{tikz}
\usetikzlibrary{shapes,arrows,positioning} 
\tikzset{
	%Define standard arrow tip
	>=stealth',
	%Define style for boxes
	block/.style={
		rectangle,
		rounded corners,
		draw=black, very thick,
		text width=12em,
		minimum height=3em,
		text centered},
	% Define arrow style
	link/.style={
		->,
		thick,
		shorten <=2pt,
		shorten >=2pt},
	% Define decision style       
	decision/.style={
		diamond,
		draw, very thick,
		fill=blue!20, 
		text width=8em,
		aspect=3,
		text centered}
}

\newcommand{\longthmtitle}[1]{\mbox{}{\bf \textit{(#1).}}}
\theoremstyle{plain}
\newtheorem{thm}{Theorem}
\newtheorem{lem}{Lemma}

\theoremstyle{definition}
\newtheorem{assump}{Assumption}

\theoremstyle{remark}

\makeatletter
\newcommand{\pushright}[1]{\ifmeasuring@#1\else\omit\hfill$\displaystyle#1$\fi\ignorespaces}
\newcommand{\pushleft}[1]{\ifmeasuring@#1\else\omit$\displaystyle#1$\hfill\fi\ignorespaces}
\renewcommand*\env@matrix[1][*\c@MaxMatrixCols c]{%
	\hskip -\arraycolsep
	\let\@ifnextchar\new@ifnextchar
	\array{#1}}
\makeatother

\DeclareMathOperator{\N}{\mathcal{N}}

\newcommand{\nodes}{\N}

%\newcommand{\setdef}[2]{\{#1 \; | \; #2\}}

%\newcommand{\subscr}[2]{#1_{\text{#2}}}
%\newcommand{\supscr}[2]{#1^{\textup{#2}}}
%\newcommand{\until}[1]{\{1,\dots,#1\}}

% Procend
\newcommand\oprocendsymbol{\hbox{$\bullet$}}
\newcommand\oprocend{\relax\ifmmode\else\unskip\hfill\fi\oprocendsymbol}

%\hyphenation{op-tical net-works semi-conduc-tor}

\begin{document}
	
	\title{Weight Design of Distributed Approximate Newton Algorithms for Constrained Optimization}
	\author{Tor Anderson \quad Chin-Yao Chang \quad Sonia Mart{\'\i}nez
		\thanks{Tor Anderson, Chin-Yao Chang,  and Sonia Mart{\'\i}nez are
			with the Department of Mechanical and Aerospace Engineering,
			University of California, San Diego, CA, USA. Email: {\small {\tt
					\{tka001, chc433, soniamd\}@eng.ucsd.edu}}} }

	\maketitle
	
	\begin{abstract}
		% This paper presents a distributed algorithm to solve an
		% economic dispatch problem, which takes the form of a
		% linearly-constrained resource allocation
		% problem. Distributed gradient-based methods are commonly
		% used to solve problems of this form, which inherit slow
		% convergence.  The Newton method is a centralized alternative
		% which uses second-order information to provide faster
		% convergence.  However, computing a Newton step is difficult
		% in distributed settings and typically requires all-to-all
		% agent communication.  In this paper, we propose the
		% $\distnewton$ algorithm to approximate the Newton step with
		% only distributed communication. The convergence of this
		% algorithm is discussed and rigorously analyzed.  In
		% addition, we aim to address the problem of designing
		% communication topologies and weightings that are optimal for
		% second-order methods.  To this end, we propose an effective
		% approximation which is loosely based on completing the
		% square to address the NP-hard bilinear optimization involved
		% in the design.  Simulations demonstrate that our proposed
		% weight design applied to the $\distnewton$ algorithm has a
		% superior convergence property compared to existing
		% gradient-inspired weight design applied to the Distributed
		% Gradient Descent method.
		Motivated by economic dispatch and linearly-constrained
		resource allocation problems, this paper proposes a novel
		$\distnewton$ algorithm that approximates the standard
		Newton optimization method. A main property of this
		distributed algorithm is that it only requires agents to
		exchange constant-size communication messages. The
		convergence of this algorithm is discussed and rigorously
		analyzed.  In addition, we aim to address the problem of
		designing communication topologies and weightings that are
		optimal for second-order methods.  To this end, we propose
		an effective approximation which is loosely based on
		completing the square to address the NP-hard bilinear
		optimization involved in the design.  Simulations
		demonstrate that our proposed weight design applied to the
		$\distnewton$ algorithm has a superior convergence property
		compared to existing weighted and distributed first-order
		gradient descent methods.
	\end{abstract}
	
	\IEEEpeerreviewmaketitle

	\section{Introduction}

	\textit{Motivation.} 
	Networked systems endowed with distributed, multi-agent
	intelligence are becoming pervasive in modern infrastructure
	systems such as power networks, traffic networks, and
	large-scale distribution systems. However, these advancements
	lead to new challenges in the coordination of the multiple
	agents operating the network, which are mindful of the network
	dynamics, availability of partial information, and
	communication constraints. Distributed algorithms are useful
	in managing such systems in a manner that elegantly utilizes
	network resources, which are often (by nature) non-centralized
	but more scalable. To this end, distributed convex
	optimization is a rapidly emerging field which seeks to
	develop such algorithms and understand their properties,
	e.g.~convergence or robustness, from a mathematically rigorous
	perspective.  There are a wide array of techniques that seek
	to distributively compute the solution to optimization
	problems. One such problem is economic dispatch, in which a
	total net load constraint must be satisfied by a set of
	generators which each have an associated cost of producing
	electricity. This problem has gained a large amount of recent
	attention due to the rapid emergence of distributed renewable
	energy resources, such as wind and solar power and demand
	response.  However, the existing distributed techniques are
	often limited by speed of convergence.  Motivated by this and
	resource allocation problems in general, we investigate the
	design of topology weighting strategies that build on the
	Newton method and lead to improved convergence speeds.

	\textit{Literature Review.}  The Newton method for minimizing
	a real-valued multivariate objective function is well
	characterized for centralized contexts in~\cite{SB-LV:04}. The
	notion of computing an \emph{approximate} Newton direction in
	distributed contexts has gained popularity in recent
	literature, such as~\cite{AM-QL-AR:15}
	and~\cite{AJ-AO-MZ:09}. In the former work, the authors
	propose a method which uses the Taylor series expansion for
	inverting matrices. However, it assumes that each agent keeps
	an estimate of the entire decision variable, which does not
	scale well in problems where this variable dimension is equal
	to the number of agents in the network. Additionally, the
	optimization is unconstrained, which helps to keep the network
	decoupled but has a narrower scope in practice.  The latter
	work poses a separable optimization with an equality
	constraint characterized by the incidence matrix. The proposed
	approximated Newton method may be not directly applied to
	networks with constraints that involve the information of all
	agents.  
	In addition to the aforementioned works, the
	papers~\cite{DJ-JX-JM-14,FZ-DV-AC-GP-LS:13,RC-GN-LS-DV:15}
	incorporate multi-timescaled dynamics together with a dynamic
	or online consensus step to speed up the convergence of the
	agreement subroutine. 
	The aforementioned works only consider uniform edge weight,
	while sophisticated design of the weighting may improve the
	convergence. In~\cite{LX-SB:06}, the Laplacian weight design
	problem for separable resource allocation is approached from a
	Distributed Gradient Descent perspective. Solution
	post-scaling is also presented, which can be found similarly
	in~\cite{MM-RT:07} and~\cite{YS:03} for improving the
	convergence of the Taylor series expression for matrix
	inverses.  In~\cite{SYS-MA-LEG:10}, the authors consider edge
	weight design to minimize the spectrum of Laplacian
	matrices. However, in the Newton descent framework, the weight
	design problem formulates as an NP-hard bilinear problem,
	which is challenging to solve.
	Overall, the current weight-design techniques that are
	computable in polynomial time are only mindful of first-order
	algorithm dynamics. This approach has its challenges, which
	manifest themselves in a bilinear design problem and more
	demanding communication requirements, but using second-order
	information is more heedful of the problem geometry and leads
	to faster convergence speeds.

	\textit{Statement of Contributions.}  In this paper, we
	propose a novel framework to design a weighted Laplacian
	matrix that is used in the solution to a multi-agent
	optimization problem via sparse approximated Newton
	algorithms. Motivated by economic dispatch, we start by
	formulating a separable resource allocation problem subject to
	a global linear constraint, and then derive an equivalent
	unconstrained form by means of a Laplacian matrix. The Newton
	steps associated with the unconstrained optimization problem
	do not inherit the same sparsity as the distributed
	communication network. To address this issue, we consider
	approximations based on a Taylor series expansion, where the
	first few terms inherit certain level of sparsity as
	prescribed by choice of the Laplacian matrix.  We analyze the
	approximated algorithms and show their (exponential)
	convergence for any truncation of the series expansion.  The
	effectiveness of the approximation heavily depends on the
	selection of the weighted Laplacian, which we then set out to
	optimize. The optimal solution requires solving a NP-hard
	bilinear optimization problem, which we address by proposing a
	novel approach which systematically computes an effective
	solution in polynomial time. A bound on the \emph{best-case}
	solution of the original bilinear problem is also
	given. Furthermore, through a formal statement of the proposed
	$\distnewton$ algorithm (or $\dana$), we find several
	interesting insights on second-order distributed methods. We
	compare the results of our design and algorithm to a generic
	weighting design of Distributed Gradient Descent (DGD)
	implementations in simulation. Our weighting design shows
	superior convergence to DGD.

	\textit{Organization.} The rest of the paper is organized as
	follows. Section~\ref{sec:prelim} introduces the notations and
	fundamentals used in this paper. We formulate the optimal
	resource allocation problem in
	Section~\ref{sec:prob-statement}. In Section~\ref{sec:main},
	we propose a distributed algorithm that approximates the
	Newton step in solving the
	optimization. Section~\ref{sec:sims-discuss} demonstrates the
	effectiveness of the proposed algorithm. We conclude the paper
	in Section~\ref{sec:conclusion}.

	\section{Preliminaries}
	\label{sec:prelim}
	\subsection{Notation}
	Let $\real$ and $\real_{+}$ denote the set of real and positive real
	numbers, respectively, and let $\natural$ denote the set of natural
	numbers.  For a vector ${x \in \real^n}$, we denote by $x_i$ the
	$\supscr{i}{th}$ entry of $x$.  For a matrix ${A \in\real^{n \times
			m}}$, we write $A_i$ as the $\supscr{i}{th}$ row of $A$ and
	$A_{ij}$ as the element in the $\supscr{i}{th}$ row and
	$\supscr{j}{th}$ column of $A$. Discrete time-indexed variables are
	written as $x^k$, where $k$ denotes the current time step. The
	transpose of a vector or matrix is denoted by $x^\top$ and $A^\top$,
	respectively. We use the shorthand notations ${\mathbf{1}_n = [1,
		\dots, 1]^\top \in \real^n}$, ${\mathbf{0}_n = [0, \dots, 0]^\top
		\in \real^n}$, and $I_n$ to denote the ${n \times n}$ identity
	matrix. The standard inner product of two vectors $x,y \in \real^n$ is
	written $\langle x, y\rangle$, and $x \perp y$ indicates $\langle x,
	y\rangle = 0$. For a real-valued function ${f : \real^n \rightarrow
		\real}$, the gradient vector of $f$ with respect to $x$ is denoted
	by ${\nabla_x f(x)}$ and the Hessian matrix with respect to $x$ by
	${\nabla_x^2 f(x)}$.  The positive (semi) definiteness and negative
	(semi) definiteness of a matrix ${A \in \real^{n \times n}}$ is
	indicated by ${A \succ 0}$ and ${A \prec 0}$ (resp. ${A \succeq 0}$
	and ${A \preceq 0}$). The set of eigenvalues of a symmetric matrix ${A
		\in \real^{n \times n}}$ is ordered as $\lambda_1 \leq \dots \leq
	\lambda_n$ with associated eigenvectors $v_1, \dots , v_n \in
	\real^n$.  An orthogonal matrix ${T \in \real^{n \times n}}$ has the
	property ${T^\top T = T T^\top = I_n}$ and ${T^\top = T^{-1}}$. For a
	finite set $\mathcal{S}$, $\vert \mathcal{S} \vert$ is the cardinality
	of the set. The uniform distribution on the interval $[a, b]$ is
	indicated by $\mathcal{U} [a, b]$.
	
	\subsection{Graph Theory}
	A network of agents is represented by a graph $\mathcal{G} =
	(\nodes,\mathcal{E})$, with a node set $\nodes = \until{n}$, and edge
	set $\mathcal{E} \subset \nodes \times \nodes$. The edge set
	$\mathcal{E}$ has elements $(i,j) \in \mathcal{E}$ for $j \in
	\nodes_i$, where $\nodes_i \subset \nodes$ is the set of out-neighbors
	of agent $i \in \nodes$. The union of out-neighbors to each agent $j
	\in \nodes_i$ are the 2-hop neighbors of agent $i$, and denoted by
	$\nodes_i^2$. More generally, $\nodes_i^p$, or set of $p$-hop
	neighbors of $i$, is the union of the neighbors of agents in
	$\nodes_i^{p-1}$.  From this point forward, we consider undirected
	networks, so ${(i,j) \in \mathcal{E}}$ if and only if ${(j,i) \in
		\mathcal{E}}$.  The graph has an associated weighted Laplacian ${L
		\in \real^{n \times n}}$, defined as
	\begin{equation*}
	L_{ij} =
	\begin{cases}
	-w_{ij}, & j \in \nodes_i, j \neq i, \\
	w_{ii}, & j = i, \\
	0, & \text{otherwise},
	\end{cases}
	\end{equation*}
	with weights $w_{ij} = w_{ji} > 0, \forall j \neq i$, and total
	incident weight $w_{ii}$ on~$i\in \nodes$, ${w_{ii} = \sum_{j \in
			\nodes_i} w_{ij}}$. Evidently, $L$ has an eigenvector ${v_1 =
		\mathbf{1}_n}$ with an associated eigenvalue ${\lambda_1 = 0}$, and
	${L = L^\top \succeq 0}$. The graph is connected iif $0$ is a simple
	eigenvalue, i.e.~$0 = \lambda_1 < \lambda_2 \leq \dots \leq
	\lambda_n$.
	
	A Laplacian $L$ can also be written as a product of its incidence
	matrix ${E \in \{-1,0,1\}^{\vert \mathcal{E} \vert \times n}}$ and a
	diagonal matrix ${X \in \real_{+}^{\vert \mathcal{E} \vert \times
			\vert \mathcal{E} \vert}}$, whose entries are the weights
	$w_{ij}$. Each row of $E$ is associated with an edge, which can be
	alternatively enumerated as $e \in \until{\vert \mathcal{E} \vert}$,
	where the elements $E_{ev}$ are given by
	\begin{equation*}
	E_{ev} =
	\begin{cases}
	1, & v = i,			\\
	-1, & v = j,			\\
	0, & \text{otherwise}.
	\end{cases}
	\end{equation*}
	Then, $L$ can be written as $L = E^\top X E$.
	
	\subsection{Schur Complement}
	The following lemma will be used in the sequel.
	\begin{lem}[\cite{FZ:05}]\longthmtitle{Matrix Definiteness via Schur Complement}\label{lem:schur-comp}
		Consider a symmetric matrix $M$ of the form
		\begin{equation*}
		M = \begin{bmatrix}
		A & B \\ B^\top & C
		\end{bmatrix}.
		\end{equation*}
		If $C$ is invertible, then the following properties hold: \\
		(1) $M \succ 0$ if and only if $C \succ 0$ and $A - BC^{-1}B^\top \succ 0$. \\
		(2) If $C \succ 0$, then $M \succeq 0$ if and only if $A - BC^{-1}B^\top \succeq 0$.
	\end{lem}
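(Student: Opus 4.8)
The plan is to reduce both statements to the block-diagonal case by a single invertible congruence, which is exactly the matrix form of completing the square in the quadratic form $z^\top M z$. Writing $z = (x,y)$ and abbreviating the Schur complement as $S := A - B C^{-1} B^\top$, the identity I would first establish is
\begin{equation*}
M = \begin{bmatrix} I & B C^{-1} \\ 0 & I \end{bmatrix} \begin{bmatrix} S & 0 \\ 0 & C \end{bmatrix} \begin{bmatrix} I & 0 \\ C^{-1} B^\top & I \end{bmatrix} =: P D P^\top,
\end{equation*}
where I use that $C = C^\top$ (so $C^{-1}$ is symmetric and $(BC^{-1})^\top = C^{-1}B^\top$) to recognize the rightmost factor as $P^\top$. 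Verifying $M = P D P^\top$ is a direct block multiplication, so I would not dwell on it; at the level of quadratic forms it is just the observation $z^\top M z = x^\top S x + (y + C^{-1}B^\top x)^\top C (y + C^{-1}B^\top x)$.

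The key point is that $P$ is unit upper-triangular, hence invertible with $\det P = 1$, so $M$ and $D$ are congruent. For every $z$ we then have $z^\top M z = (P^\top z)^\top D (P^\top z)$, and since $P^\top$ is a bijection, as $z$ ranges over all nonzero vectors so does $w := P^\top z$. This transfers sign information between $M$ and $D$ in both the strict and non-strict senses: $M \succ 0 \iff D \succ 0$ and $M \succeq 0 \iff D \succeq 0$. (Equivalently, one may invoke Sylvester's law of inertia, which states that congruent matrices share the same inertia.)

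It then remains only to read off the definiteness of the block-diagonal $D = \diag{S, C}$. Writing $w = (u,v)$, we have $w^\top D w = u^\top S u + v^\top C v$, and testing with $w$ supported on one block at a time gives $D \succ 0 \iff (S \succ 0 \text{ and } C \succ 0)$ and $D \succeq 0 \iff (S \succeq 0 \text{ and } C \succeq 0)$. Combining with the congruence equivalence yields part (1) verbatim. For part (2), the standing hypothesis $C \succ 0$ makes $C \succeq 0$ automatic, so $M \succeq 0 \iff D \succeq 0 \iff S \succeq 0$, which is the claim.

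The one genuine subtlety, and the place I would be most careful, is the role of the hypothesis $C \succ 0$ in part (2): without it, $S \succeq 0$ alone cannot force $M \succeq 0$, since the $v^\top C v$ term could be negative, and the block-diagonal reduction is precisely what exposes this. A secondary point worth noting is that in part (1) only invertibility of $C$ is assumed, with its positive definiteness emerging as a \emph{conclusion} rather than a hypothesis, consistent with the asymmetry in how $C$ enters the two statements.
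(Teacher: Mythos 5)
Your proof is correct: the congruence $M = PDP^\top$ with $P$ unit block-triangular and $D = \operatorname{diag}(A - BC^{-1}B^\top,\, C)$ is the standard completing-the-square argument, and your handling of the asymmetry between parts (1) and (2) (why $C \succ 0$ must be a hypothesis in the semidefinite case) is exactly right. Note that the paper itself offers no proof to compare against --- it states the lemma as a known result cited from the reference \cite{FZ:05}, where essentially this same factorization argument appears --- so your write-up simply supplies the standard proof the paper omits.
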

	\subsection{Taylor Series Expansion for Matrix Inverses} \label{ssec:Taylor}
	
	A full-rank matrix $A \in \real^{n \times n}$ has a matrix inverse,
	$A^{-1}$, which is characterized by the relation $AA^{-1} = I_n$. In
	general, it is not straightforward to compute this inverse via a
	distributed algorithm.  However, if the eigenvalues of $A$ satisfy
	$\vert 1 - \lambda_i(A) \vert < 1, \forall \,i \in \mathcal{N}$, then
	we can employ the Taylor expansion to compute its inverse as follows:
	\begin{align*}
	A^{-1} = \sum_{p=0}^\infty (I_n - A)^p.
	\end{align*}
	Note that, if the sparsity structure of $A$ represents a network
	topology, then traditional matrix inversion techniques such as
	Gauss-Jordan elimination still necessitate all-to-all
	communication. However, agents can communicate and compute locally to
	obtain each term in the previous expansion.  It can be seen via the
	diagonalization of $I_n - A$ that the terms of the sum become small as
	$p$ increases due to the assumption on the eigenvalues of $A$. The
	convergence of these terms is exponential and limited by the slowest
	converging mode, i.e.~$\max{\vert 1 - \lambda_i(A) \vert}$.
	
	We can compute an approximation of $A^{-1}$, denoted by
	$\widetilde{A^{-1}_q}$, in finite steps by computing and summing the
	terms up to the $\supscr{q}{th}$ power.  We refer to this
	approximation as a \textit{q-approximation} of $A^{-1}$.
	
	\section{Problem Statement} \label{sec:prob-statement} Motivated by
	the economic dispatch problem, in this section, we pose the separable
	resource allocation that we aim to solve distributively. We
	reformulate it as an unconstrained optimization problem whose decision
	variable is in the span of the graph Laplacian, and motivate the
	characterization of a second-order Newton-inspired method.
	
	Consider a group $\nodes$ of agents or nodes, indexed by $i \in
	\nodes$, each associated with a local convex cost function $f_i :
	\real \rightarrow \real$, $i \in \nodes$. These agents can be thought
	of as generators in an electricity market, where each function
	argument, $x_i \in \real$, $i \in \nodes$, represents the power in
	megawatts that agent $i$ produces at a cost characterized by
	$f_i$. The economic dispatch problem aims to satisfy a global
	load-balancing constraint $\sum_{i=1}^n x_i = d$ for minimal global
	cost $f : \real^n \rightarrow \real$, where $d$ is the total load
	consumption in megawatts. We relax the full problem by excluding the
	box constraints on the states $x_i$. Then, the relaxed economic dispatch
	optimization problem we are interested in solving is stated as:
	\begin{alignat}{3}
	\Pc 1: \quad 
	&\underset{x}{\min} \quad &&f(x) = \sum_{i=1}^n f_i(x_i) \label{eq:main-cost} \\
	&\text{s.t.} &&\sum_{i=1}^n x_i = d. \label{eq:main-constraint}
	\end{alignat}
	Excluding the box constraints has the effect of allowing some states
	to be negative, which physically could addressed via curtailment of
	some baseline loads.  For simplicity, we do not consider these
	constraints here; addressing this question via quadratic loss
	functions is the subject of current work.  Distributed optimization
	algorithms based on a gradient descent approach are available to solve
	$\Pc 1$~\cite{MZ-SM:15-book}.  However, by only taking into account
	first-order information of the cost functions, these methods tend to
	be inherently slow. As for a Newton (second-order) method, the
	computation of the descent direction is not distributed. To see this,
	recall the unconstrainted Newton step defined as ${\Delta
		\subscr{x}{nt} := -\nabla^2 f(x)^{-1}\nabla f(x)}$, see
	e.g.~\cite{SB-LV:04}. In this context, the equality constraint can be
	eliminated by imposing ${x_n = d - \sum_{i=1}^{n-1} x_i}$. Then,
	\eqref{eq:main-cost} becomes ${f(x) = \sum_{i=1}^{n-1} f_i(x_i) +
		f_n(d - \sum_{i=1}^{n-1} x_i)}$. In general, the resulting Hessian
	$\nabla^2_x f(x)$ is fully populated and its inverse requires
	all-to-all communication among agents in order to compute the
	second-order descent direction.

	Instead, we eliminate \eqref{eq:main-constraint} by introducing a 
	network topology as encoded by a Laplacian matrix $L$ associated with
	a connected graph $\graph$, and an initial condition $x^0 \in
	\real^n$, with the following assumptions.
	\begin{assump}\longthmtitle{Symmetric Laplacian matrix} 	\label{ass:conn-graph}
		The weighted graph characterized by $L$ is undirected and connected,
		i.e. $L = L^\top$ and $0$ is a simple eigenvalue of $L$.
	\end{assump}
	\begin{assump}\longthmtitle{Feasible initial condition} \label{ass:initlal}
		The initial state $x^0$ is a feasible point of $\Pc 1$, i.e.
		\begin{equation*}
		\sum_{i=1}^n x_i^0 = d.   
		\end{equation*}
	\end{assump}
	The objective and constraint ~\eqref{eq:main-cost}--\eqref{eq:main-constraint} 
	can now be restated as the following equivalent problem:
	\begin{align*}
	\Pc 2: \quad & \underset{z}{\min}
	\quad f(x^0 + Lz) = \sum_{i=1}^n \textit{f}_i(x_i^0 + L_i z). %\label{eq:laplacian-problem}
	\end{align*}
	Using the property that $\mathbf{1}_n$ is an eigenvector associated
	with $\lambda_1 = 0$ we have that $\mathbf{1}_n^\top (x^0 + Lz) = d$
	for any $z \in \real^n$. Then, the solution to
	$\Pc 2$ is a feasible and optimal solution to
	$\Pc 1$. An analogous formulation of equality constrained
	Newton descent for centralized solvers is given in~\cite{SB-LV:04}; 
	in our distributed framework, the row
	space of the Laplacian is a useful property to
	address~\eqref{eq:main-constraint}.
	
	We aim to leverage the freedom given by the elements of $L$ in order
	to compute an approximate Newton direction to~$\Pc
	2$, 
	denoted by $\subscr{\tilde{z}}{nt}$. In every iteration $k$, the
	approximated Newton step $\subscr{\tilde{z}}{nt}$ is used to update
	$x^k$ by the following
	
	\begin{align} \label{eq:short algorithm} & x^{k+1} = x^k +
	\alpha L \subscr{\tilde{z}}{nt},
	%		\end{split}
	\end{align}
	where $\alpha > 0$ is a step size. There is agent-to-agent
	communication and local computations embedded in this informal
	statement, which will be described and restated formally in
	the following section.
	
	\section{Main Results}
	\label{sec:main}
	
	In this section, we characterize an \emph{approximate} Newton
	step that can be computed distributively to solve problem $\Pc
	1$. We then pose the weight-design problem and develop an
	approximation which enables a solution to be computed in
	polynomial time. Next, we provide a method for estimating the
	best-case scenario of the solution to the NP-hard problem. We
	then formally state the $\distnewton$ algorithm and rigorously
	analyze its convergence properties.
	
	\subsection{Characterization of the Approximate Newton
		Step}\label{ssec:char_approx_newton}
	
	First, note that the Hessian of~\eqref{eq:main-cost}, $H := \nabla_x^2
	f(x)$, is diagonal. In addition, we adopt the following assumption.
	\begin{assump}\longthmtitle{Quadratic cost functions}\label{ass:cost}
		The local costs can be written in the form $f_i(x_i) =
		\dfrac{1}{2}a_i x_i^2 + b_i x_i$ with $a_i \in \real_{+},
		b_i \in \real$. In light of this, given $\delta = \min(a_i),
		\gamma = \max(a_i)$, then
		\begin{equation*}
		0 \prec \delta I_n \preceq \nabla_x^2 f(x) = H \preceq \gamma I_n.
		\end{equation*}
	\end{assump}
	This ensures $H = \diag{a_i}$ is constant, which is necessary
	to construct the notion of an optimal $L$. In fact, this is a
	widely accepted model for generator costs, and the assumption
	is common in the literature for addressing economic
	dispatch. We use $b \in \real^n$ to refer to the vector of
	linear coefficients.
	
	The Hessian of the unconstrained problem with respect to $z$
	can then be computed as $\nabla_z^2 f(x_{0} + L z) = L H
	L$. We would like to find an analogous Newton step in $z$ to
	solve $\Pc 2$, but clearly $LHL$ is non-invertible due to the
	smallest eigenvalue of $L$ fixed at zero, a manifestation of
	the equality constraint in the original problem $\Pc 1$.
	
	To reconcile with this, we change the coordinates of $z$ by the orthogonal matrix $T$ given as  
	\begin{equation*}
	T\hspace{-1mm} = \hspace{-1mm}\begin{bmatrix}
	n\hspace{-1mm} -\hspace{-1mm} 1 + \hspace{-1mm}\sqrt{n} & -1 & \cdots & -1 & \dfrac{1}{\sqrt{n}} \\
	-1 & \ddots & \cdots & \vdots & \\
	\vdots & & \ddots & -1 & \vdots \\
	-1 & \cdots & -1 & n\hspace{-1mm} -\hspace{-1mm} 1 + \hspace{-1mm}\sqrt{n} & \\
	-1 - \sqrt{n} & \cdots & \cdots & -1 - \sqrt{n} & \dfrac{1}{\sqrt{n}}
	\end{bmatrix}\hspace{-1mm}\text{diag}(\hspace{-.5mm}\begin{bmatrix}
	\rho \\ 1
	\end{bmatrix}\hspace{-.5mm}),
	\end{equation*}
	where $\rho = \sqrt{n(n+1+2\sqrt{n})}^{-1}\mathbf{1}_{n-1}$. 
	This transformation leaves the last column as a constant multiple of the
	all ones vector. The new coordinates allow us to rewrite $Lz$ with a
	reduced order variable $\hat{z}\in\real^{n-1}$. More specifically, $Lz =
	LTJ^\top\hat{z}$, where $J = \begin{bmatrix} I_{n-1} &
	\mathbf{0}_{n-1}\end{bmatrix}$. 
	We then rewrite $f$ by substituting $z$ by $\hat{z}$
	\begin{align*}
	f(x^k + Lz)&
	= \dfrac{1}{2} z^\top LHL z + \dfrac{1}{2}x^{k\top} H x^k + z^\top LHx^k					\\
	&+ z^\top Lb + x^{k\top}b        	\\
	&= \dfrac{1}{2} \hat{z}^\top JT^\top LHL T J^\top\hat{z} + \dfrac{1}{2}x^{k\top} H x^k \\
	&+ \hat{z}^\top J T^\top
	(Lb + LHx^k) + x^{k\top}b.
	\end{align*}
	Taking the gradient and Hessian with respect to $\hat{z}$ in this
	expression gives
	\begin{equation*}
	\begin{aligned}
	\nabla_{\hat{z}} f(x^k + Lz) &= 
	JT^\top
	LHLTJ^\top\hat{z} + JT^\top (Lb + LHx^k) \\
	\nabla_{\hat{z}}^2 f(x^k + Lz) &= JT^\top LHLTJ^\top.
	\end{aligned}
	\end{equation*}
	Notice that the zero eigenvalue of $\nabla_z^2 f$ is eliminated by the
	projection. The Newton step associated with $\hat{z}$ is then well
	defined as ${\subscr{\hat{z}}{nt} =-(\nabla_{\hat{z}}^2 f)^{-1}
		\nabla_{\hat{z}} f}$. For brevity, we define $M = JT^\top
	LHLTJ^\top$ and $v(x^k) = Lb+LHx^k$. The Newton step in $\hat{z}$ is
	rewritten as ${\subscr{\hat{z}}{nt} = -M^{-1}(M\hat{z} + JT^\top
		v(x^k))}$ which, when evaluated at $z = 0$, reduces to
	$\subscr{\hat{z}}{nt} = -M^{-1}JT^\top v(x^k)$.
	
	Now, consider a $q$-approximation of $M^{-1}$ given by
	$\widetilde{M_q^{-1}} = \sum_{p=0}^q (I_{n-1} - M)^p$. We can then
	approximate $\subscr{\hat{z}}{nt}$ as $-\widetilde{M_q^{-1}}JT^\top
	v(x^k)$. Returning to the original coordinates:
	\begin{equation*}
	L\subscr{z}{nt} \approx L\subscr{\tilde{z}}{nt} 
	= - LTJ^\top \sum_{p=0}^{q} (I_{n-1} - M)^p JT^\top v(x^k).
	\end{equation*}
	With the property that $LTJ^\top JT^\top L = LL$, we can rewrite $L\subscr{\tilde{z}}{nt}$ as
	\begin{equation}
	L\subscr{\tilde{z}}{nt} = -L \sum_{p=0}^q (I_n - LHL)^p (Lb +LHx^k).
	\label{eq:znewton-step}
	\end{equation}
	
	The series $L(I_n - LHL)^p$ converges with $p\rightarrow\infty$ if the following assumption holds.
	
	\begin{assump}\longthmtitle{Convergent eigenvalues}\label{ass:e-vals}
		The smallest $n-1$ eigenvalues of $(I_n - LHL)$ are contained in 
		the unit ball, i.e. $\exists \ \varepsilon < 1$ such that
		\begin{equation*}
		-\varepsilon I_{n-1} \preceq (I_{n-1} - M) \preceq \varepsilon I_{n-1}.
		\end{equation*}
	\end{assump}
	
	In the following section, we address Assumption~\ref{ass:e-vals} by
	minimizing $\epsilon$ via weight design of the Laplacian. By doing
	this, we aim to obtain a good approximation of $M^{-1}$ from the
	Taylor expansion with small $q$.
	
	\subsection{Weight Design of the
		Laplacian} \label{sec:topology-design}
	
	Our approach returns to the intuition on the rate of convergence of
	the $q$-approximation of $M^{-1}$. We design a weighting scheme for a
	communication topology characterized by $L$ which lends itself to a
	scalable, fast approximation of $\subscr{z}{nt}$. To do this, we aim
	to minimize $\max \vert 1 - \lambda_i(M) \vert$, and pose this problem
	as
	\begin{equation*}		%\label{eq:eval problem}
	\begin{aligned}
	\Pc 3: \quad
	& \underset{\varepsilon, L}{\text{min}}
	&&\varepsilon \\
	& \text{s.t.}
	&&- \varepsilon I_{n-1} \preceq I_{n-1} - M \preceq \varepsilon I_{n-1}, \\
	&&& L \mathbf{1}_n = \mathbf{0}_n, \ L \succeq 0, \\
	&&& L_{ij} = 0,\, j \notin \nodes_i.
	\end{aligned}
	\end{equation*}
	
	There is one nonconvexity in this problem given by the right side of
	the matrix inequality, which is bilinear in $L$.  If we ignore the
	sparsity constraint on $L$, it is possible to treat the constraint as
	an LMI in $M$, obtain an exact solution, and then perform some
	eigendecomposition on $M$ and compute a nonsparse solution in $L$.
	However, implementing this solution would be non-distributed. There
	are path-following techniques available to solve bilinear problems of
	this form~\cite{AH-JH-SB:99}, but simulation results do not produce a
	satisfactory solution via this method. For this reason, we develop a
	convex approximation of $\Pc 3$.
	
	Consider $\varepsilon_-$ and $\varepsilon_+$
	corresponding to the left-hand (convex) and right-hand
	(nonconvex) matrix inequalities,
	respectively. To handle $-\varepsilon_-I_{n-1} \preceq I_{n-1} - M$,
	we write $L$ as a weighted product
	of its incidence matrix, $L = E^\top X E$. Applying Lemma~\ref{lem:schur-comp} makes the constraint become
	\begin{equation}\label{ineq:convex_constraint}
	\begin{bmatrix}
	(\varepsilon_- + 1)I_{n-1} & JT^\top E^\top X E 			\\
	E^\top X E T J^\top & H^{-1}
	\end{bmatrix} \succeq 0.
	\end{equation}
	As for the right-hand side $I_{n-1} - M \preceq \varepsilon_+ I_{n-1}$,
	consider the approximation $LHL \approx \left(\dfrac{\sqrt{H}L +
		L\sqrt{H}}{2}\right)^2$. Substitute this in $M$ to get
	\begin{align*}
	&\dfrac{1}{4}J T^\top
	(\sqrt{H}L + L\sqrt{H})^2
	T J^\top \succeq (1 - \varepsilon_+)I_{n-1} \\
	&\dfrac{1}{2}J T^\top
	(\sqrt{H}L + L\sqrt{H}) T J^\top \succeq \sqrt{(1 - \varepsilon_+)}I_{n-1} \\
	&\dfrac{1}{2}J T^\top
	(\sqrt{H}L + L\sqrt{H}) T J^\top \hspace{-1mm}\succeq (1\hspace{-.5mm} - \hspace{-.5mm}
	\dfrac{\varepsilon_+}{2}\hspace{-.5mm} +\hspace{-.5mm}\dfrac{\varepsilon_+^2}{8} +
	O(\varepsilon_+^3))I_{n-1},
	\end{align*}
	where the second line uses the property that $A^2 \succeq (1 - \varepsilon_+)I_{n-1} \succeq 0
	\Leftrightarrow A \succeq \sqrt{1-\varepsilon_+}I_{n-1} \succeq 0$~\cite{JV-RB:00}, 
	and that $T J^\top J T^\top = I_n - \mathbf{1}_n\mathbf{1}_n^\top /
	n$ is idempotent. The third line expresses the right-hand side as a
	Taylor expansion about $\varepsilon_+ = 0$. Neglecting the higher
	order terms in $\varepsilon_+$, and applying Lemma~\ref{lem:schur-comp} gives
	\begin{equation}\label{ineq:nonconvex_constraint}
	\begin{bmatrix}
	\bullet & \dfrac{1}{\sqrt{8}}\varepsilon_+ I_{n-1} 			\\
	\dfrac{1}{\sqrt{8}}\varepsilon_+ I_{n-1} & I_{n-1}
	\end{bmatrix} \succeq 0,
	\end{equation}
	where ${\bullet = \dfrac{1}{2}J T^\top (\sqrt{H}L + L\sqrt{H})
		T J^\top} - {(1 -
		\dfrac{1}{2}\varepsilon_+)I_{n-1}}$.
	
	Returning to $\Pc 3$, note that the latter three 
	constraints are addressed by writing $L = E^\top X
	E$. Then, the approximate reformulation of $\Pc 3$ can be
	written as
	
	\begin{equation*}		%\label{eq:approx eval problem}
	\begin{aligned}
	\Pc 4: \quad &\underset{\varepsilon_-, \varepsilon_+,
		X}{\text{min}}
	&&\max(\varepsilon_-, \varepsilon_+) \\
	& \text{s.t.}
	&& \varepsilon_- \geq 0, \varepsilon_+ \geq 0,			\\
	&&& X \succeq 0, \eqref{ineq:convex_constraint}, \eqref{ineq:nonconvex_constraint}.	\\
	\end{aligned}
	\end{equation*}
	
	This is a convex problem in $X$ and solvable in polynomial time. To improve
	the solution, we perform some post-scaling. Take $L^\star_0 = E^\top
	X_0^\star E$, where $X_0^\star$ is the solution to $\Pc 4$, and
	${M^\star_0 = JT^\top
		L^\star_0HL^\star_0TJ^\top} \succ
	0$. Then, consider
	\begin{equation*}
	\beta = \sqrt{\dfrac{2}{\lambda_1(M^\star_0) + \lambda_{n-1}(M^\star_0)}}
	\end{equation*}
	and take $L^\star = \beta L^\star_0$. This shifts the eigenvalues of
	$M^\star_0$ to $M^\star$ (defined similarly via $L^\star$) such that
	$1 - \lambda_1(M^\star) = -(1 - \lambda_{n-1}(M^\star))$, which
	shrinks $\max(\vert 1 - \lambda_i(M^\star)\vert)$. We refer to this
	metric as $\varepsilon_{L^\star} := \max(\vert 1 - \lambda_i(M^\star)\vert)$. 
	In fact, it can be verified that this post-scaling
	guarantees $\varepsilon_{L^\star}$ satisfies Assumption~\ref{ass:e-vals}. Then, the solution to $\Pc 4$ followed by a post scaling by
	$\beta$ given by $L^\star$ is an approximation of the solution to the
	nonconvex problem $\Pc 3$ with the sparsity structure preserved.
	
	\subsection{A Bound on Performance}
	
	It should be unsurprising that, given a sparsity structure for the
	network, even the globally optimal solution to $\Pc 3$ will typically
	produce a nonzero $\varepsilon$. We are then motivated to find a
	"best-case scenario" for our solution given the structure of the
	network. The approach for this is straightforward: instead of solving
	$\Pc 3$ for $L$, we solve it for some $A$ where $A$ has the sparsity
	structure of $LHL$, i.e. the two-hop neighbor structure of the
	network. Define $M_A := JT^\top A T J^\top$. This problem is stated
	as:
	\begin{equation*}\label{eq:lower bound}
	\begin{aligned}
	\Pc 5: \quad &\underset{\varepsilon, A}{\text{min}}
	&&\varepsilon \\
	& \text{s.t.}
	&&- \varepsilon I_{n-1} \preceq I_{n-1} - M_A \preceq \varepsilon I_{n-1}, \\
	&&& A \mathbf{1}_n = \mathbf{0}_n, \ A \succeq 0, \\
	&&& A_{ij} = 0, j \notin \nodes^2_i.
	\end{aligned}
	\end{equation*}
	
	This problem is convex in $A$ and produces a solution $\varepsilon_A$,
	which serves as a lower bound for the solution to $\Pc 3$. Of course,
	there may not exist an $L$ with the desired sparsity such that $LHL =
	A$, $\forall A$ in the feasibility set of $\Pc 5$. This is what makes
	$\Pc 3$ difficult to solve, but it gives us some notion for how
	effective the post-scaled solution to $\Pc 4$ is in the following
	sense: $\varepsilon_{L^\star} - \varepsilon_A$ indicates how close
	$\varepsilon_{L^\star}$ is to the lower bound of the global optimum of
	$\Pc 3$.
	
	\subsection{The $\distnewton$ Algorithm}
	
	We now have the tools that we need to introduce the $\dana$, or the
	$\distnewton$ algorithm.
	
	%% Tor: I had to comment out these lines for now to get the document to compile on the new computer -- getting a "No counter 'Algorithm' defined" error
	\begin{algorithm}
		\caption{$\distnewton$}
		\label{Approx Newton}
		\begin{algorithmic}[1]\label{alg:approx-newton}
			\Require $L_j, a_j, b_j$ for $j \in \{i\} \cup \nodes_i$ and communication with nodes $j \in \nodes_i \cup \nodes_i^2$
			\Procedure{Newton}{$x^0,L,H,b,q$}
			\State Initialize $x^0$
			\For{$k = 0, 1, \dots$}
			\ForAll{$i$}				\label{alg1:loop first term}
			\State Acquire $x_j^k$ for $j \in \nodes_i$
			\State $y_i \gets L_ib + (LH)_ix^k$
			\State $z_i \gets -y_i$
			\EndFor
			\For{$p = 1, \dots, q$}		\label{alg1:loop pth term}
			\ForAll{$i$}
			\State Acquire $y_j$ for $j \in \nodes_i^{2}$
			\State $w_i = (I_n - LHL)_i y$
			\EndFor
			\State $y \gets w$
			\State $z \gets z - y$	\label{alg1:sum z}
			\EndFor
			\ForAll{$i$}					\label{alg1:loop step}
			\State Acquire $z_j$ for $j \in \nodes_i$
			\State Compute $x_i^{k+1} = x_i^{k} + \alpha L_i z$
			\EndFor
			\EndFor
			\State \textbf{return} $x$
			\EndProcedure
		\end{algorithmic}
	\end{algorithm}

	The algorithm is constructed directly from~\eqref{eq:short algorithm}
	and~\eqref{eq:znewton-step}. The right-hand factor
	of~\eqref{eq:znewton-step} is computed first in the loop starting on
	line~\ref{alg1:loop first term}. Then, each additional term of the sum
	is computed recursively in the loop starting on line~\ref{alg1:loop
		pth term}, where $w$ is used as an intermediate variable. The outer
	loop of the algorithm is performed starting on line~\ref{alg1:loop
		step}. The process then repeats for a desired number of time steps
	$k$. If $q$ is increased, it requires additional inner-loops and
	two-hop communications, but the step approximation will be more
	accurate.

	It may come as a surprise that the effect of
	increasing the inner loop parameter $q$ by $1$ is algebraically
	equivalent to taking an additional outer loop step at $q = 0$. To
	see this, consider $\hat{x}^{k+1} = x^k + L\subscr{\tilde{z}}{nt}^+
	= x^k + L\left(\subscr{\tilde{z}}{nt} - (I - LHL)^{q+1}(Lb +
	LHx^k)\right)$. The polynomial part of the additional term can be
	quickly expanded with the coefficients given by the
	$\supscr{(q+1)}{th}$ row of Pascal's triangle, which then gets
	multiplied from the right by $Lb$ and $LHx^k$. Compare this to the
	two-step iteration: $x^{k+1} = x^k + L\subscr{\tilde{z}}{nt}$
	followed by $x^{k+2} = x^{k+1} - L(Lb + LHx^{k+1})$, i.e. two outer
	loops with the second loop using $q=0$. It is the case that
	$\hat{x}^{k+1} = x^{k+2}$ and these implementations give an
	equivalent result.
	
	This has some interesting consequences. Applying this
	"loop conversion" notion recursively, it follows that an
	implementation using some $q_a$ inner loop parameter with $k_a$
	outer loops can be converted to an equivalent implementation with
	$k_0 = (q_a + 1)k_a$ outer loops and $q_0 = 0$ inner loop
	parameter. Then, this can be converted again to some $q_b$
	implementation with $k_b = k_0 / (q_b + 1)$ outer loops. If $q_b +
	1$ is not a divisor of $k_0$, the "remainder" can be made up with an
	additional outer loop using an appropriate $q_c < q_b$ parameter.

	A concise summary can be made with the following
	relation: if $k_a, q_a, k_b$ and $q_b$ are such that
	\begin{equation}
	k_a(q_a + 1) = k_b(q_b + 1)
	\end{equation}
	holds, then implementing the $\dana$  from the same $x^0$ will result
	in $x^{k_a} = x^{k_b}$. With this in mind, if direct two-hop
	communications are not available, message passing is required and it
	is less desirable to implement inner-loop iterations when
	communications are failing or lossy. Future work will be devoted to
	evaluate the performance of the alternative approaches with various
	$q$ under the presence of message failures.

	\subsection{Convergence Analysis}
	
	This section establishes convergence properties of the $\distnewton$
	algorithm for problems of the form $\Pc 1$.
	
	\begin{thm}\longthmtitle{Convergence of  $\dana$}
		{\rm If Assumption~\ref{ass:conn-graph}, on the bidirectional
			connected graph, Assumption~\ref{ass:initlal}, on the feasibility
			of the initial condition, Assumption~\ref{ass:cost}, on quadratic
			cost functions, and Assumption~\ref{ass:e-vals}, on convergent
			eigenvalues, hold, then the optimal solution $x^\star$ of $\Pc 1$
			is a unique globally exponentially stable point under the dynamics
			of the $\distnewton$ algorithm for any $q \in \natural$.
		}
	\end{thm}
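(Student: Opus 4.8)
The plan is to show that, in the error coordinate $e^k := x^k - x^\star$, the algorithm reduces to a linear time-invariant iteration whose system matrix is a contraction on the feasible subspace, and to read off the contraction factor from the spectrum of $M$.

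First I would establish the relevant invariants. Since every increment $x^{k+1}-x^k = \alpha L\subscr{\tilde{z}}{nt}$ lies in $\operatorname{range}(L) = \mathbf{1}_n^\perp$ and $x^0$ is feasible (Assumption~\ref{ass:initlal}), all iterates satisfy $\mathbf{1}_n^\top x^k = d$; strict convexity (Assumption~\ref{ass:cost}) then guarantees a unique minimizer $x^\star$ on this affine set. The first-order optimality condition gives $\nabla f(x^\star) = Hx^\star + b = \mu\mathbf{1}_n$ for some scalar $\mu$, so $L(b + Hx^\star) = \mu L\mathbf{1}_n = 0$ (Assumption~\ref{ass:conn-graph}); hence $x^\star$ is a fixed point of~\eqref{eq:short algorithm}--\eqref{eq:znewton-step}. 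Writing $G_q := \sum_{p=0}^q (I_n - LHL)^p$ and subtracting this fixed-point relation from the update yields the exact linear error dynamics $e^{k+1} = (I_n - \alpha\,L G_q L H)\,e^k$, with $\mathbf{1}_n^\perp$ invariant because $\operatorname{range}(L G_q L H)\subseteq\operatorname{range}(L)=\mathbf{1}_n^\perp$.

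The core of the argument is the spectral analysis of $\mathcal{A} := L G_q L H$ on $\mathbf{1}_n^\perp$. I would exploit that $\mathcal A = XY$ with $X = L$ and $Y = G_q L H$, so it shares its nonzero spectrum with $YX = G_q\,LHL$. The latter is a polynomial in the symmetric matrix $LHL$, and the telescoping identity gives $G_q\,LHL = I_n - (I_n - LHL)^{q+1}$. Since the eigenvalues of $LHL$ are $0$ (eigenvector $\mathbf{1}_n$) together with the eigenvalues $\mu_1,\dots,\mu_{n-1}$ of $M = JT^\top LHL\,TJ^\top$ — the two operators agree because the columns of $TJ^\top$ form an orthonormal basis of $\mathbf{1}_n^\perp$ and $LHL$ leaves $\mathbf{1}_n^\perp$ invariant — the nonzero eigenvalues of $\mathcal A$ are $\eta_i = 1 - (1-\mu_i)^{q+1}$, $i=1,\dots,n-1$. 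A short check confirms these are exactly the eigenvalues of $\mathcal A|_{\mathbf{1}_n^\perp}$: $\mathcal A$ is similar to the symmetric matrix $H^{1/2}L G_q L H^{1/2}$, hence diagonalizable, and its single zero eigenvector solves $L G_q L H w=0$, which, using $G_q\succ 0$, forces $Hw\in\operatorname{span}(\mathbf{1}_n)$, i.e. $w\in\operatorname{span}(H^{-1}\mathbf{1}_n)\not\subseteq\mathbf{1}_n^\perp$.

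It then remains to bound the spectrum and close the stability argument. Assumption~\ref{ass:e-vals} gives $|1-\mu_i|\le\varepsilon<1$, whence each $\eta_i\in[\,1-\varepsilon^{q+1},\,1+\varepsilon^{q+1}\,]\subset(0,2)$. The eigenvalues of the restricted matrix $I_{n-1}-\alpha\,\mathcal A|_{\mathbf{1}_n^\perp}$ are $1-\alpha\eta_i$, so for any step size $\alpha\in(0,1]$ (more generally $\alpha\in(0,2/\max_i\eta_i)$) their magnitudes are bounded by some $r<1$; with $\alpha=1$ the rate is the sharp value $\max_i|1-\mu_i|^{q+1}\le\varepsilon^{q+1}$, which also makes the earlier loop-conversion remark transparent. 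Because $\mathcal A|_{\mathbf{1}_n^\perp}$ is diagonalizable, this spectral bound gives $\|e^k\|\le C\,r^k\|e^0\|$ for all feasible $e^0$, establishing global exponential stability of the unique $x^\star$ for every $q\in\natural$. The main obstacle I anticipate is the bookkeeping in the previous paragraph — certifying that the restriction to $\mathbf{1}_n^\perp$ captures exactly the $n-1$ positive eigenvalues $\eta_i$ while the zero mode lies off the feasible subspace — rather than the contraction estimate itself.
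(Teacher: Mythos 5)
Your proof is correct, and it takes a genuinely different route from the paper's. The paper argues via a discrete-time Lyapunov function $V(x^k) = (x^k - x^\star)^\top L^\dagger L^\dagger (x^k - x^\star)$ built on the pseudoinverse of $L$: it exploits the fact that an exact Newton step solves the quadratic problem in one iteration to write $x^k - x^\star = L A_2 v(x^k)$ with the formally divergent series $A_2 = \sum_{p=0}^\infty (I_n - LHL)^p$ (whose non-convergent mode must be ``canceled'' by $L$), reduces the decrease of $V$ to the matrix inequality \eqref{eq:Lyap-asym-LMI}, and then splits by parity of $q$ --- immediate for $q$ odd, and via the Jordan-decomposition bound \eqref{eq:Lyapunov-LMI-even} for $q$ even --- finally extracting exponential decay in \eqref{eq:Lyap-exponential} with an unspecified rate $\hat{c}$. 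You instead subtract the fixed point to obtain the exact linear error dynamics $e^{k+1} = (I_n - \alpha L G_q L H)e^k$ and compute the spectrum outright, using $\operatorname{spec}(LG_qLH)\setminus\{0\} = \operatorname{spec}(G_q LHL)\setminus\{0\}$ together with the telescoping identity $G_q\, LHL = I_n - (I_n - LHL)^{q+1}$, so that the eigenvalues on the feasible subspace are exactly $1 - \alpha\eta_i$ with $\eta_i = 1 - (1-\mu_i)^{q+1}$. This buys several things the paper's route does not: a uniform treatment of both parities of $q$ (no case split, and no Jordan-form bound, which is heavier than needed anyway since $I_n - LHL$ is symmetric and hence orthogonally diagonalizable); no manipulation of the divergent series $A_2$; an explicit sharp rate $\varepsilon^{q+1}$ at $\alpha = 1$, which quantifies the payoff of the inner loop and makes the paper's loop-conversion identity $k_a(q_a+1) = k_b(q_b+1)$ an immediate corollary; and an admissible step-size range $\alpha \in \bigl(0, 2/\max_i \eta_i\bigr)$ rather than the paper's ``unit step size without loss of generality.'' Two compressed steps deserve a written line. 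First, $G_q \succ 0$ for every $q$: since $I_n - LHL$ is symmetric with eigenvalues $1$ and $1-\mu_i \in [-\varepsilon,\varepsilon]$, the eigenvalues of $G_q$ are $q+1$ and $\bigl(1-(1-\mu_i)^{q+1}\bigr)/\mu_i = \eta_i/\mu_i > 0$ (note $\mu_i \geq 1-\varepsilon > 0$ under Assumption~\ref{ass:e-vals}); the paper proves the analogous fact for $A_1$ by a different factorization. Second, in the kernel step, $LG_qLHw = 0$ gives $G_q(LHw) \in \nulo(L) = \spn(\mathbf{1}_n)$, and since $\mathbf{1}_n$ is an eigenvector of $G_q$ (eigenvalue $q+1$), $LHw \in \spn(\mathbf{1}_n)\cap\operatorname{range}(L) = \{0\}$, which yields $Hw \in \spn(\mathbf{1}_n)$ as you claim. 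With those two lines filled in, your argument is complete, and it in fact delivers a sharper, more informative conclusion than the paper's Lyapunov analysis.
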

	
	\begin{proof} 
		For uniqueness, consider the KKT condition which gives the linear
		relation:
		
		\begin{equation*}
		\begin{pmatrix}
		H & \mathbf{1}_n \\ \mathbf{1}^\top_n & 0
		\end{pmatrix} 
		\begin{pmatrix} x^\star \\ \lambda^\star \end{pmatrix} = \begin{pmatrix} -b \\ d \end{pmatrix},
		\end{equation*} 
		where $\lambda^\star$
		is the optimal Lagrange multiplier. It is easy to verify that the
		left-hand side matrix is nonsingular, which gives a unique solution
		$x^\star$.
		
		We establish convergence via a discrete-time 
		Lyapunov function and assuming a unit step size without loss of
		generality. Consider the symmetric matrix $L^\dagger$ characterized by $LL^\dagger
		= I_n - \mathbf{1}_n\mathbf{1}_n^\top /n,$ $\nulo(L^\dagger) =
		\spn(\mathbf{1}_n)$.  Let ${V(x^k) = (x^k - x^\star)^\top L^\dagger
			L^\dagger(x^k - x^\star)}$, which is well defined for any $x \in
		\real^n$, $\sum_i x_i = d$. The dynamics of $\dana$ 
		are given 
		by $x^{k+1} = x^k + L\subscr{\tilde{z}}{nt}$, where
		$\subscr{\tilde{z}}{nt} = -\sum_{p=0}^q (I_n - LHL)^p v(x^k)$, where
		recall that $v(x^k) =Lb +
		LHx^k$. 
		We have $V(x^k) > 0, $ $\ \forall x^k$ such that $\sum_i x_i^k = d,
		x^k \neq x^\star$ (which follows from $(x^k - x^\star) \notin
		\nulo(L^\dagger)$), $V$ is radially unbounded over the space where
		$\sum_i x_i = d$, 
		and $V(x^\star) = 0$. Hence, $V$ is a Lyapunov function candidate. We
		aim to show ${V(x^{k+1}) - V(x^k) < 0}$, for any $q \in \natural$ and $k \ge 0$, where
		\begin{equation}\label{eq:V-znt}
		\begin{aligned}
		&V(x^{k+1}) - V(x^k) = V(x^k + L\subscr{\tilde{z}}{nt}) -
		V(x^k) \\ &= 2\subscr{\tilde{z}}{nt}^\top L L^\dagger
		L^\dagger (x^k - x^\star) + \subscr{\tilde{z}}{nt}^\top (I_n
		- \mathbf{1}_n\mathbf{1}_n^\top /n)\subscr{\tilde{z}}{nt}.
		\end{aligned}
		\end{equation}	
		Since we consider optimization with quadratic cost functions, an exact
		Newton step solves the problem in one step. In other words, $x^k -
		x^\star = -L\subscr{z}{nt} = L\sum_{p=0}^\infty(I_n - LHL)^pv(x^k)$
		holds. Next, define $A_1 := \sum_{p=0}^q(I_n - LHL)^p$ and $A_2 :=
		\sum_{p=0}^\infty(I_n - LHL)^p$, for brevity. Note that $A_2$ is not a
		convergent series expansion due to the non-converging mode, but the
		argument given in Section~\ref{ssec:char_approx_newton} 
		as to "canceling" this mode via multiplication
		by $L$ is applicable. To see that $A_1 \succ 0$ for $q$ odd, we write
		$A_1 = \sum_{l=0}^{(q-1)/2} (2I_n - LHL)(I_n - LHL)^{2l}$. 
		Note that each term in $A_1$ is symmetric and positive
		definite due to $(2I_n - LHL) \succ 0, (I_n - LHL)^{2l} \succ 0$, and
		the product being symmetric. For the case of $q$ even, the last term
		is $(I_n - LHL)^q \succ 0$.
		
		Finally, note that $LL^\dagger = LL^\dagger L^\dagger L = (I_n - \mathbf{1}_n\mathbf{1}_n^\top /n)$ is idempotent. Returning to the matter at hand, substituting in~\eqref{eq:V-znt} gives
		\begin{equation}\label{eq:V-A1-A2}
		\begin{aligned}
		V(x^{k+1}) - V(x^k) &= -2v(x^k)^\top A_1 (I_n -
		\mathbf{1}_n\mathbf{1}_n^\top /n) A_2 v(x^k) \\ &+ v(x^k)^\top A_1
		(I_n - \mathbf{1}_n\mathbf{1}_n^\top /n) A_1 v(x^k).
		\end{aligned}
		\end{equation}
		We have that $v(x^k) \perp \mathbf{1}_n$, so showing the negativity of~\eqref{eq:V-A1-A2} is equivalent to showing
		\begin{multline}\label{eq:Lyap-asym-LMI}
		2A_1 (I_n - \mathbf{1}_n\mathbf{1}_n^\top /n) A_2 - A_1 (I_n -
		\mathbf{1}_n\mathbf{1}_n^\top /n) A_1 \\ \succeq c(I_n -
		\mathbf{1}_n\mathbf{1}_n^\top /n),
		\end{multline}
		for some $c > 0$. To show this, rewrite $A_2 = \sum_{l=0}^\infty (I_n -
		LHL)^{l(q+1)} A_1$. Then,~\eqref{eq:Lyap-asym-LMI} can be written as
		\begin{multline} \label{eq:Lyapunov-LMI-odd}
		A_1(I_n - \mathbf{1}_n\mathbf{1}_n^\top /n)\left( \sum_{l=0}^\infty \Big[ 2(I_n - LHL)^{l(q+1)}\Big] - I_n \right) A_1 \\
		\succeq c(I_n - \mathbf{1}_n\mathbf{1}_n^\top /n).
		\end{multline}
		For $q$ odd, it is straightforward to see this holds. The $-I_n$
		cancels with an $l=0$ term and the remaining terms are raised to even
		powers. Therefore the left-hand side is positive definite and such $c$
		can be found. For $q$ even, we use the bound
		\begin{equation} \label{eq:Lyapunov-LMI-even}
		\begin{aligned}
		\bullet \ &\succeq A_1(I_n - \mathbf{1}_n\mathbf{1}_n^\top /n)\left( \sum_{l=0}^\infty \left[ 2(-\varepsilon)^{l(q+1)} \right] - 1\right)A_1 \\
		&= A_1(I_n - \mathbf{1}_n\mathbf{1}_n^\top /n)\left( \dfrac{2}{1+\varepsilon^{q+1}}-1\right) A_1 \\
		&\succeq c(I_n - \mathbf{1}_n\mathbf{1}_n^\top /n),
		\end{aligned}
		\end{equation}
		where $\bullet$ is the top line of~\eqref{eq:Lyapunov-LMI-odd}.  To
		see this, write $I_n - LHL = QDQ^{-1}$ as its Jordan decomposition,
		where $D$ is a matrix consisting of the Jordan blocks of $I_n - LHL$
		whose diagonal entries are given by the vector $\diag{D} = (1, \eta_1,
		\dots \eta_{n-1})$ where $\eta_i \in [-\varepsilon, \varepsilon], $
		for $i = 1, \dots, n-1$. 
		Then, $(I_n - LHL)^{l(q+1)} =
		QD^{l(q+1)}Q^{-1}$ with $D^{l(q+1)}$ upper triangular. The smallest
		diagonal entry of $\sum_{l=0}^\infty D^{l(q+1)}$ is bounded from below
		by $\sum_{l=0}^\infty (-\varepsilon)^{l(q+1)}$. Finally, we arrive at
		the relation
		\begin{multline*}
		(I_n - \mathbf{1}\mathbf{1}_n^\top /n)\sum_{l=0}^\infty 2QD^{l(q+1)}Q^{-1} \\
		\succeq (I_n - \mathbf{1}\mathbf{1}_n^\top /n)\sum_{l=0}^\infty 2(-\varepsilon)^{l(q+1)},
		\end{multline*}
		which gives the bound in~\eqref{eq:Lyapunov-LMI-even}.
		%The bound can be seen via Jordan decomposition of $(I_n - LHL)$, and 
		From Assumption~\ref{ass:e-vals} we have $\varepsilon \in [0,1)$. This verifies~\eqref{eq:Lyapunov-LMI-odd} ($q$ odd) or~\eqref{eq:Lyapunov-LMI-even} ($q$ even) hold for some $c >
		0$. This is sufficient to show $x^\star$ is a globally asymptotically
		stable point under the dynamics of $\distnewton$ algorithm.
		
		For exponential convergence, we are interested in showing $V(x^{k+1})
		\leq (1-\hat{c})V(x^k)$ for some $\hat{c} \in (0,1)$ that is
		independent of $k$. Consider the $q = 0$ case without loss of
		generality. Note that $v(x^\star) = 0$, implying $v(x^k) = v(x^k) -
		v(x^\star) = LH(x^k - x^\star)$. 
		Then, substituting in~\eqref{eq:V-A1-A2} and using the $c$
		characterized by~\eqref{eq:Lyapunov-LMI-odd}--\eqref{eq:Lyapunov-LMI-even} gives
		\begin{equation}\label{eq:Lyap-exponential}
		\begin{aligned}
		V(x^{k+1}) &\leq V(x^k) -c(v(x^k)^\top LL^\dagger L^\dagger Lv(x^k)) \\
		&= V(x^k) - c(x^k - x^\star)^\top HL^2L^{\dagger 2}L^2H(x^k - x^\star) \\
		&\leq (1 - \hat{c})V(x^k),
		\end{aligned}
		\end{equation}
		where $\hat{c} \in (0,1)$ is a function of $c, L$,
		and $H$, and $\hat{c} > 0$ due to $v(x^k) \perp \nulo{(L)}$.  This
		shows the Lyapunov function exponentially decays at a rate
		characterized by $\hat{c}$.
	\end{proof}

	\section{Simulations and Discussion}		\label{sec:sims-discuss}
	
	In this section, we implement our weight design and verify the
	convergence of the $\distnewton$ algorithm 
	for different $q \ge 0$, and compare their speed of convergence in
	terms of the number of outer loops $k$ employed.  Returning to the
	economic dispatch motivation, we consider a network with $n = 50$
	generators and $\vert \mathcal{E} \vert = 150$ randomly generated
	bidirectional communication links between generators. The local
	computations required of each generator are simple vector operations
	whose dimension scales linearly with the network size, which can be
	implemented on a microprocessor. The graph topology is plotted in
	Figure~\ref{fig:graph}.  The cost coefficients are generated as $a_i
	\in \mathcal{U} \left[ 0.8, 1.2 \right],$ $b_i \in \mathcal{U} \left[
	0, 1 \right],$ and power requirement is taken to be $d = 50$MW. We
	compare to Distributed Gradient Descent (DGD) with unit step size and
	two weightings on $L$.  The first is ``unweighted'', in the sense that
	$L$ is taken to be the degree matrix minus the adjacency matrix of the
	graph, followed by the post-scaling described in
	Section~\ref{sec:topology-design} to guarantee convergence.  The
	second weighting on $L$ for DGD is proposed in~\cite{LX-SB:06}. The
	results are given in Figure~\ref{fig:qcomp}, which shows linear
	convergence to the optimal value as the number of iterations
	increases, with fewer iterations needed for larger $q$. We note a
	substantially improved outer-loop convergence over the DGD methods,
	even for the $q=0$ case which utilizes an equal number of
	agent-to-agent communications as DGD.

	\begin{figure}[h]
		\centering
		\includegraphics[scale = 0.3]{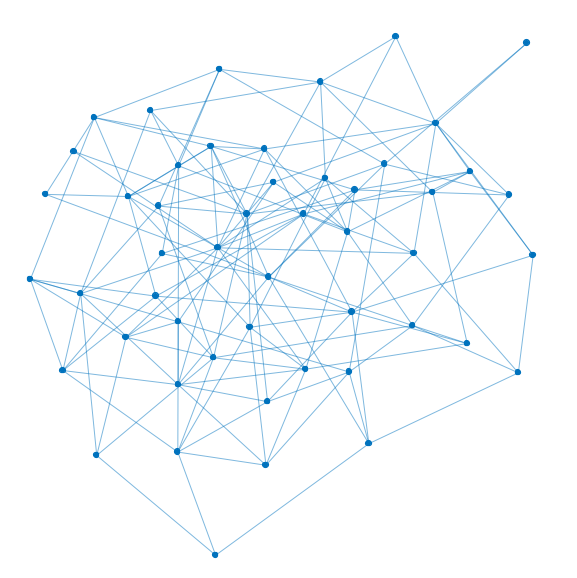}
		\caption{Communication topology used for comparisons of $\distnewton$ and DGD.}
		\label{fig:graph}
	\end{figure}
	
	\begin{figure}[h]
		\centering
		\includegraphics[scale = 0.55]{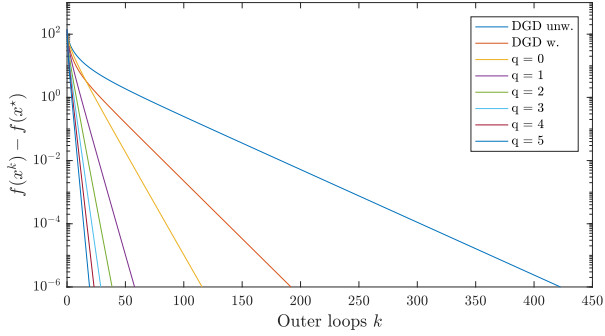}
		\caption{Convergence of $\distnewton$ for varying
			$q$ compared to unweighted and weight-designed DGD.}
		\label{fig:qcomp}
	\end{figure}
	
	On the weighting design side, consider the following metrics.  The
	solution to $\Pc 4$ followed by the post-scaling by $\beta$ gives
	$\varepsilon_{L^\star} := \max(\vert 1 - \lambda_i(M^\star)\vert )$;
	this metric represents the convergence speed of $\distnewton$ 
	when applying our proposed weight design of $L$. 
	Using the same topology $(\nodes, \mathcal{E})$, the solution to $\Pc
	5$ gives the metric $\varepsilon_A$. Note that $\varepsilon_A$ is a
	\emph{best-case} estimate of the weight design problem, which cannot
	be implemented in practice, while $\varepsilon_{L^\star}$ is the
	metric for which we can compute an $L^\star$. The objective of each
	problem is to minimize the associated
	$\varepsilon$; 
	to this end, we aim to characterize the
	relationship between network parameters and these metrics. We ran 100
	trials on each of 16 test cases which encapsulate a variety of
	parameter cases: two cases for the cost coefficients, a \textit{tight}
	distribution $a_i \in \mathcal{U}\left[ 0.8, 1.2\right]$ and a
	\textit{wide} distribution $a_i \in \mathcal{U}\left[ 0.2,
	5\right]$. For topologies, we randomly generated connected graphs
	with network size $n \in \{10, 20, 30, 40, 50\}$, a \textit{linearly}
	scaled number of edges $\vert\mathcal{E}\vert = 3n$, and a
	\textit{quadratically} scaled number of edges $\vert\mathcal{E}\vert =
	0.16n^2$ for the $n \in \{30, 40, 50\}$ cases. The linearly scaled
	connectivity case corresponds to keeping the average degree of a node
	constant for increasing network sizes, while the quadratically scaled
	case roughly preserves the proportion of connected edges to total
	possible edges, which is a quadratic function of $n$ and equal to
	$n(n-1)/2$ for an undirected network. The results are depicted in
	Table~\ref{L-design-table}. This gives the mean $\mu$ and standard
	deviation $\sigma$ of the distributions for \emph{performance}
	$\varepsilon_{L^\star}$ and \emph{performance gap} $\varepsilon_{L^\star} -
	\varepsilon_A$.
	
	\begin{table}[]
		\centering
		\caption{Laplacian Design}
		\label{L-design-table}
		\begin{tabular}{|c||c|c|c|c|}
			\hline
			\begin{tabular}[c]{@{}c@{}} $a_i \in \mathcal{U} \left[ 0.8, 1.2\right]$ \\ $b_i \in \mathcal{U} \left[ 0, 1\right]$\end{tabular} & $\mu ( \varepsilon_{L^\star})$ & $\sigma (\varepsilon_{L^\star})$ & $\mu ( \varepsilon_{L^\star} - \varepsilon_A)$ & $\sigma ( \varepsilon_{L^\star} - \varepsilon_A)$ \\ \hline
			\begin{tabular}[c]{@{}c@{}}$n = 10$ \\ $\vert \mathcal{E} \vert = 30$ \end{tabular}  & 0.6343 & 0.0599 & 0.2767 & 0.0186 \\ \hline
			\begin{tabular}[c]{@{}c@{}}$n = 20$ \\ $\vert \mathcal{E} \vert = 60$ \end{tabular}  & 0.8655 & 0.0383 & 0.2879 & 0.0217 \\ \hline
			\begin{tabular}[c]{@{}c@{}}$n = 30$ \\ $\vert \mathcal{E} \vert = 90$ \end{tabular}  & 0.9100 & 0.0250 & 0.2666 & 0.0233 \\ \hline
			\begin{tabular}[c]{@{}c@{}}$n = 40$ \\ $\vert \mathcal{E} \vert = 120$ \end{tabular}  & 0.9303 & 0.0201 & 0.2501 & 0.0264 \\ \hline
			\begin{tabular}[c]{@{}c@{}}$n = 50$ \\ $\vert \mathcal{E} \vert = 150$ \end{tabular}  & 0.9422 & 0.0175 & 0.2375 & 0.0264 \\ \hline
			\begin{tabular}[c]{@{}c@{}}$n = 30$ \\ $\vert \mathcal{E} \vert = 144$ \end{tabular}  & 0.7266 & 0.0324 & 0.2973 & 0.0070 \\ \hline
			\begin{tabular}[c]{@{}c@{}}$n = 40$ \\ $\vert \mathcal{E} \vert = 256$ \end{tabular}  & 0.6528 & 0.0366 & 0.2829 & 0.0091 \\ \hline
			\begin{tabular}[c]{@{}c@{}}$n = 50$ \\ $\vert \mathcal{E} \vert = 400$ \end{tabular}  & 0.5840 & 0.0281 & 0.2641 & 0.0101 \\ \hline \hline
			\begin{tabular}[c]{@{}c@{}} $a_i \in \mathcal{U} \left[ 0.2, 5\right]$ \\ $b_i \in \mathcal{U} \left[ 0, 1\right]$\end{tabular} & $\mu ( \varepsilon_{L^\star})$ & $\sigma (\varepsilon_{L^\star})$ & $\mu ( \varepsilon_{L^\star} - \varepsilon_A)$ & $\sigma ( \varepsilon_{L^\star} - \varepsilon_A)$ \\ \hline
			\begin{tabular}[c]{@{}c@{}}$n = 10$ \\ $\vert \mathcal{E} \vert = 30$ \end{tabular}  & 0.6885 & 0.0831 & 0.3288 & 0.0769 \\ \hline
			\begin{tabular}[c]{@{}c@{}}$n = 20$ \\ $\vert \mathcal{E} \vert = 60$ \end{tabular}  & 0.8965 & 0.0410 & 0.3241 & 0.0437 \\ \hline
			\begin{tabular}[c]{@{}c@{}}$n = 30$ \\ $\vert \mathcal{E} \vert = 90$ \end{tabular}  & 0.9389 & 0.0254 & 0.2878 & 0.0395 \\ \hline
			\begin{tabular}[c]{@{}c@{}}$n = 40$ \\ $\vert \mathcal{E} \vert = 120$ \end{tabular} & 0.9539 & 0.0189 & 0.2830 & 0.0355 \\ \hline
			\begin{tabular}[c]{@{}c@{}}$n = 50$ \\ $\vert \mathcal{E} \vert = 150$ \end{tabular} & 0.9628 & 0.0168 & 0.2590 & 0.0335 \\ \hline
			\begin{tabular}[c]{@{}c@{}}$n = 30$ \\ $\vert \mathcal{E} \vert = 144$ \end{tabular} & 0.7997 & 0.0520 & 0.3587 & 0.0524 \\ \hline
			\begin{tabular}[c]{@{}c@{}}$n = 40$ \\ $\vert \mathcal{E} \vert = 256$ \end{tabular} & 0.7339 & 0.0550 & 0.3688 & 0.0569 \\ \hline
			\begin{tabular}[c]{@{}c@{}}$n = 50$ \\ $\vert \mathcal{E} \vert = 400$ \end{tabular} & 0.6741 & 0.0487 & 0.3543 & 0.0425 \\ \hline
		\end{tabular}
	\end{table}
	
	There are a few notable takeaways from these results. Firstly, we note
	that the \emph{tightly} distributed coefficients $a_i$ result in
	improved $\varepsilon_{L^\star}$ across the board compared to the
	\emph{widely} distributed coefficients. We attribute this to the
	approximation $LHL \approx \left(\dfrac{\sqrt{H}L +
		L\sqrt{H}}{2}\right)^2$ being more accurate for roughly
	homogeneous $H = \diag{a_i}$. Next, it is clear that in the cases with
	\emph{linearly} scaled edges, $\varepsilon_{L^\star}$ worsens as
	network size increases.  This is intuitive: the \textit{proportion} of
	connected edges in the graph decreases as network size increases in
	these cases. This also manifests itself in the performance gap
	$\varepsilon_{L^\star} - \varepsilon_A$ shrinking, indicating the
	\emph{best-case} solution $\varepsilon_A$ (for which a valid $L$ does
	not necessarily exist) degrades even quicker as a function of network
	size than our solution $\varepsilon_{L^\star}$. On the other hand,
	$\varepsilon_{L^\star}$ substantially improves as network size
	increases in the \emph{quadratically} scaled cases, with a roughly
	constant performance gap $\varepsilon_{L^\star} -
	\varepsilon_A$. Considering this relationship between the linear and
	quadratic scalings on $\vert \mathcal{E}\vert$ and the metrics
	$\varepsilon_{L^\star}$ and $\varepsilon_A$, we get the impression
	that both proportion of connectedness and average node degree play a
	role in both the effectiveness of our weight-designed solution
	$L^\star$ and the best-case solution.  For this reason, we postulate
	that $\varepsilon_{L^\star}$ remains roughly constant in large-scale
	applications if the number of edges is scaled subquadratically as a
	function of network size; equivalently, the convergence properties of
	$\distnewton$ algorithm remain relatively unchanged when using our
	proposed weight design and growing the number of communications per
	agent sublinearly as a function of
	$n$. 
	
	\section{Conclusion and Future Work}
	\label{sec:conclusion}
	
	Motivated by economic dispatch problems, this work proposed the novel
	$\distnewton$ algorithm. More generally, the algorithm can be applied
	to a class of separable resource allocation problems with quadratic
	costs. We then posed the topology design proplem, and provided an
	effective method for designing communication weightings.  The
	weighting design we propose is more cognizant of the problem geometry,
	and it outperforms the current literature on network weight
	design. Ongoing work includes the generalization to arbitrary convex
	functions, nonseparable contexts, general equality and inequality
	constraints, design for robustness under uncertain parameters or lossy
	communications, and a more direct application to economic dispatch and
	power networks.  Additionally, we are interested in further studying
	branch and bound methods for solving bilinear problems and other
	existing heuristics for topology design within the proposed framework.

	\bibliographystyle{abbrv}
	\bibliography{alias,SMD-add,SM}

\end{document}